\newtheorem*{acknowledgement}{Acknowledgements}
\theoremstyle{plain}
\newtheorem{theorem}[equation]{Theorem}
\newtheorem{lemma}[equation]{Lemma}
\newtheorem{prop}[equation]{Proposition}
\newtheorem{cor}[equation]{Corollary}
\theoremstyle{definition}
\newtheorem{defin}[equation]{Definition}
\numberwithin{equation}{section}
\newcommand{\bggo}{\mathcal O}
\newcommand{\comment}[1]{}
\DeclareMathOperator{\spec}{\ensuremath{Spec}}
\DeclareMathOperator{\ad}{\ensuremath{ad}}
\begin{document}

\title[]{On the local structure of quantizations in characteristic $p$ }

\author{Akaki Tikaradze}
\address{Department of Mathematics, University of Toledo, Toledo, Ohio 43606}
\email{\tt tikar06@gmail.com}
\subjclass[2010]{Primary 16S80; Secondary 17B63  }

\begin{abstract}
Let $A$ be a central quantization of an affine Poisson variety $X$ over a field of characteristic $p>0.$
We show that the completion of $A$ with repect to a closed point $y\in X$ is isomorphic to the tensor product 
of the Weyl algebra with a local Poisson algebra. This result can be thought of as a positive characteristic
analogue of results of Losev and Kaledin about slice algebras of quantizations in characteristic 0.

\end{abstract}

\maketitle
\begin{center}
\emph{Dedicated to my mother Natalia Tikaradze}
\end{center}
\section{Introduction}
Let $\bold{k}$ be an algebraically closed field of characteristic $p>2.$
By a Poisson algebra over $\bold{k}$ we will understand a commutative $\bold{k}$-algebra equipped
with a $\bold{k}$-linear Poisson bracket $\lbrace\quad ,\quad \rbrace.$ Throughout given a Poisson algebra $B$ and $b\in B$, by
$\ad(b)$ we will denote the corresponding derivation $\lbrace b,\quad \rbrace.$ While for a non-commutative
algebra $A$ and its element $a\in A,$ by $\ad(a)$ we will denote the commutator $[a,\quad ].$
Also, for an algebra $A$ by $Z(A)$ we will denote its center.
 By a (deformation) quantization of a Poisson algebra $(B, \lbrace\quad ,\quad \rbrace)$ we will understand an associative
$\bold{k}[[\hbar]]$-algebra $A$ such that $A$ is topologically free $\bold{k}[[\hbar]]$-module, $B=A/\hbar A,$ and
$$\lbrace a\mod \hbar , b\mod \hbar \rbrace =(\frac{1}{\hbar}[a , b]) \mod \hbar,\quad a, b\in A.$$
Motivated by the notion of central quantizations introduced by Bezrukavnikov and Kaledin [\cite{BK}, Definition 1.2], we will give the following

\begin{defin}
A quantization $A$ of a Poisson $\bold{k}$-algebra $B$ is called weakly central if  
for any $a\in A$ there exists $a^{[p]}\in A$ such that
$$a^p-\hbar^{p-1}a^{[p]}\in Z(A).$$ In particular $Z(A)\mod \hbar$ contains $B^p.$
\end{defin}
The existence of a weakly central quantization of a Poisson algebra $B$ imposes on it the following
necessary condition: for any $b\in B$ there exists $b^{[p]}\in B$ such that $$\ad(b)^p=\ad(b^{[p]}).$$
This motivates the following
\begin{defin}
A Poisson $\bold{k}$-algebra $B$ is said to be a weakly restricted Poisson algebra if for
any $f\in B$ there exists $f^{[p]}\in B$ such that $$\ad(f)^p=\ad(f^{[p]}).$$
\end{defin}

Our goal is to study the local structure of a quantization algebra $A$
in terms of symplectic leaves of the affine Poisson variety $\spec B.$ Let us recall
the definition of symplectic leaves in algebraic setting.

\begin{defin} 
A symplectic leaf in an affine Poisson variety $X=\spec B$ over $\bold{k}$ is a maximal locally closed 
connected subvariety $Y\subset X$
such that the corresponding ideal of its closure $I(\overline{Y})$ is a Poisson ideal and
$Y$ is a smooth symplectic variety under the Poisson bracket induces from $B.$ 
\end{defin}

By $\overline{W_{n, \hbar}}$ we will denote the complete Weyl algebra over $\bold{k}[[\hbar]]:$
$$\overline{W_{n, \hbar}}=\bold{k}[[\hbar]]\langle x_1,\cdots, x_n, y_1,\cdots, y_n\rangle, \quad [x_i, x_j]=0=[y_i, y_j], [x_i, y_j]=\hbar\delta_{ij}$$
Similarly $W_n$ will denote the usual Weyl algebra
$$W_n=\bold{k}[x_1,\cdots, x_n,y_1,\cdots, y_n], \quad [x_i, x_j]=0=[y_i, y_j], [x_i, y_j]=\delta_{ij}$$

Let $A$ be a weakly central quantization of an affine Poisson variety $X=\spec B.$ 
 Let $y\in \spec B$ be a closed point, and $m_y$ be the
corresponding maximal ideal. Denote by $A_{\bar{y}}$ the completion of $A$ with respect to two-sided
ideal $\rho^{-1}(m_y)$-the pre-image of $m_y$  under the quotient $\rho:A\to A/\hbar A=B.$ 
Put $\rho^{-1}(m_y)\cap Z(A)=Z_y$-an ideal in $Z(A)$ containing $\hbar.$
Denote by $Z(A)_{\bar{y}}$ the completion of $Z(A)$ with respect to $Z_{y}.$
Since $Z(A)$ is Noetherian and $A$ is a finitely generated $Z(A)$-module, it follows that
$A_{\bar{y}}=A\otimes_{Z(A)}Z(A)_{\bar{y}}.$ Also, $A_{\bar{y}}$ is a quantization of $B_{\bar{y}}$-the
completion of $B$ with respect to $y.$
Let
$Y$ be a symplectic leaf in $\spec B$ containing $y.$ 
Let $\dim Y=2n.$ Our
main result below (Theorem \ref{kwak}) shows that algebra $A_{\bar{y}}$ contains the completed Weyl algebra $\overline{W_{n,\hbar}}.$
In what follows by a local Poisson $\bold{k}$-algebra we will mean a local 
$\bold{k}$-algebra equipped with a Poisson bracket such that the maximal ideal is a Poisson ideal.
More precisely,
we have the following

\begin{theorem}\label{kwak}
Algebra $A_{\bar{y}}$ is isomorphic to $\overline{W_{n,\hbar}}\otimes_{Z(\overline{W_{n,\hbar}})} A_{\hat{y}}^{+},$
where $A_{\hat{y}}^{+}$ is a weakly central quantization of a  complete local Poisson algebra.
\end{theorem}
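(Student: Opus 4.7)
I would proceed by induction on the symplectic rank $n$, at each step splitting off a single Heisenberg pair from $A_{\bar y}$ modelled on a formal Darboux splitting of $B_{\hat y}$, and then iterating the construction $n$ times.

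First, the classical input. Since $y$ lies on a symplectic leaf of dimension $2n$, the Poisson tensor at $y$ has rank $2n$. A Weinstein-type argument in the formal completion produces mutually Poisson-commuting Darboux elements $\bar p_1,\dots,\bar p_n,\bar q_1,\dots,\bar q_n \in B_{\hat y}$ satisfying $\{\bar p_i,\bar q_j\}=\delta_{ij}$ and $\{\bar p_i,\bar p_j\}=\{\bar q_i,\bar q_j\}=0$, together with a complete local Poisson subalgebra $C\subset B_{\hat y}$ Poisson-commuting with all the $\bar p_i, \bar q_j$, yielding a Poisson isomorphism $B_{\hat y}\cong \bold{k}[[\bar p_1,\dots,\bar q_n]]\,\hat\otimes\, C$.

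Next, I would lift $(\bar p_1,\bar q_1)$ to a pair $P_1,Q_1\in A_{\bar y}$ with $[P_1,Q_1]=\hbar$, by successive $\hbar$-adic approximation. The first-order lift gives $[P_1,Q_1^{(0)}]=\hbar+\hbar^2 r_0$, and at each subsequent stage one needs to adjust $P_1,Q_1$ by $\hbar \bar s,\hbar \bar w$, which modulo $\hbar$ amounts to solving the equation $\{\bar p_1,\bar w\}+\{\bar s,\bar q_1\}=-\bar r_k$ in $B_{\hat y}$. In characteristic $p$ the derivations $\ad(\bar p_1)$ and $\ad(\bar q_1)$ are not jointly surjective (the corner terms $\bar p_1^{kp-1}\bar q_1^{lp-1}$ obstruct the equation), so the weakly-central hypothesis enters crucially here: the near-central $p$-th powers $P_1^p,Q_1^p$ and their $[p]$-partners, together with the restricted-Lie identity $\ad(x)^p=\ad(x^p)$, furnish enough central elements to kill the obstruction modulo $Z(A_{\bar y})$. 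Once $[P_1,Q_1]=\hbar c$ is achieved with $c\in Z(A_{\bar y})^\times$, one replaces $Q_1$ by $Q_1c^{-1}$ to normalize $[P_1,Q_1]=\hbar$ on the nose. I expect this to be the main technical obstacle.

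For the factorization, let $\overline W_1\subset A_{\bar y}$ be the closed subalgebra topologically generated by $P_1,Q_1,\hbar$; by construction $\overline W_1\cong\overline{W_{1,\hbar}}$. In characteristic $p$, $[P_1^p,Q_1]=p\hbar P_1^{p-1}=0$ and symmetrically $[Q_1^p,P_1]=0$, so if $A^{(1)}$ denotes the centralizer of $\overline W_1$ in $A_{\bar y}$, then $Z(\overline W_1)=\bold{k}[[\hbar,P_1^p,Q_1^p]]$ lies in $Z(A^{(1)})$. Since $\overline W_1$ and $A^{(1)}$ commute element-wise, multiplication induces a natural algebra map
$$\mu\colon \overline W_1 \otimes_{Z(\overline W_1)} A^{(1)} \longrightarrow A_{\bar y}.$$
I would prove $\mu$ is an isomorphism via a PBW-type normal ordering for surjectivity (iterated commutators with $P_1,Q_1$ extract the coefficient $a_{ij}\in A^{(1)}$ of each monomial $P_1^iQ_1^j$, with $\hbar$-adic completeness controlling convergence), and a filtered-to-graded injectivity argument reducing modulo $\hbar$ to the Darboux splitting of the first step and lifting via $\hbar$-adic flatness. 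For $a\in A^{(1)}$ with $a^p-\hbar^{p-1}a^{[p]}\in Z(A_{\bar y})$, the $\hbar$-torsion freeness of $A_{\bar y}$ forces $a^{[p]}\in A^{(1)}$, so $A^{(1)}$ inherits the weakly-central structure and quantizes the Poisson algebra $\bold{k}[[\bar p_2,\dots,\bar q_n]]\,\hat\otimes\, C$ of symplectic rank $2(n-1)$ at its maximal ideal. Iterating the construction $n-1$ more times yields the desired isomorphism $A_{\bar y}\cong \overline{W_{n,\hbar}}\otimes_{Z(\overline{W_{n,\hbar}})} A_{\hat y}^+$.
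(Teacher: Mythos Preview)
Your overall architecture---Darboux coordinates on the leaf, lift one Heisenberg pair, pass to the centralizer, iterate---matches the paper's. But there is a genuine gap at the factorization step.

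You compute $[P_1^p,Q_1]=p\hbar P_1^{p-1}=0$ and conclude that $Z(\overline W_1)=\bold{k}[[\hbar,P_1^p,Q_1^p]]$ lies in $Z(A^{(1)})$. That is true but insufficient: for $\mu:\overline W_1\otimes_{Z(\overline W_1)}A^{(1)}\to A_{\bar y}$ to be an isomorphism you need $P_1^p,Q_1^p\in Z(A_{\bar y})$, not merely in $Z(A^{(1)})$. Indeed, if $\mu$ were an isomorphism then $P_1^p$ would act as $P_1^p\otimes 1$ and commute with everything, so centrality is forced a posteriori. Equivalently, your ``iterated commutators'' extraction uses $D_P=\frac{1}{\hbar}\ad(P_1)$ as a nilpotent operator on all of $A_{\bar y}$, but $D_P^{\,p}=\frac{1}{\hbar}\ad(P_1^{[p]})$ vanishes only when $P_1^{[p]}$ is central, and a generic lift of $\bar p_1$ has no reason to satisfy this. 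The paper fixes this with a \emph{second} iterative correction after achieving $[f,g]=\hbar$: using the Weyl identity $(x+ry^{p-1})^p=x^p+(ry^{p-1})^p-r$, one replaces $\phi_n$ by $\phi_n+g^{p-1}\phi_n^{[p]}$, which drives $\phi_n^{[p]}$ into successively higher powers of the maximal ideal; in the limit $\phi^{[p]}=0$, hence $\phi^p\in Z(A_{\bar y})$. Your outline has no analogue of this step.

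Your first lifting step is also underspecified. You correctly locate the obstruction and assert that the weakly-central hypothesis ``furnishes enough central elements to kill'' it, but do not say how, and the fallback of normalizing $[P_1,Q_1]=\hbar c$ with $c$ a central unit is unjustified. The paper does not argue $\hbar$-adically here at all: it filters by $J=\rho^{-1}(I)+m'^{[p]}A$, applies the Weyl-module identity $M=x^{p-1}M+yM$ to write the error $z_n\in J^n$ as $f_n^{p-1}z_n'+\frac{1}{\hbar}[z_n'',g_n]$ modulo $J^{n+1}$, and then absorbs the $f_n^{p-1}z_n'$ term by adding $(f_n')^{p-1}g_n^{[p]}$ to $g_n$; the key is that $(f_n')^p\in m'^{[p]}\subset J$, pushing what remains into $J^{n+1}$. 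This is precisely where weak centrality does its work, and it does not reduce to dividing by a central unit.
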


This result is motivated and may be seen as a positive characteristic analogue of 
results of Kaledin [\cite{K}, Proposition 3.3] and Losev \cite{L} on slice
algebras of quantizations over $\mathbb{C}.$

\section{Auxiliary Lemmas}


We will recall few simple lemmas that will be used in the proof of our main result. We will include
their proofs for the reader's convenience.
\newpage
\begin{lemma}\label{matrix}
Let $S$ be a quantization of a Poisson algebra over $\bold{k}$, let $H\subset S$ be a $\bold{k}[[\hbar]]$-subalgebra
which is isomorphic to a quotient of $\overline{W_{n,\hbar}}. $
Assume that $Z(H)\subset Z(S).$ Let $S'$ be the centralizer of $H$ in $S.$ Then the multiplication map
 $H\otimes_{Z(H)} S'\to S$ is an isomorphism.
Moreover if $I\subset S$ is a two-sided ideal closed under $\frac{1}{\hbar}\ad(a),a\in S,$  then
 $I=H\otimes_{Z(H)}I',$ where $I'=S'\cap I.$ 

\end{lemma}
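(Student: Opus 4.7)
The plan is to exploit the fact that $H$, being a quotient of the completed Weyl algebra $\overline{W_{n,\hbar}}$, is an Azumaya algebra over its center $Z(H)$, and to deduce the decomposition from the resulting Morita equivalence. The hypothesis $Z(H) \subset Z(S)$ is precisely what makes $S$ into a module over $H^{e} := H \otimes_{Z(H)} H^{\mathrm{op}}$ via $(h_1 \otimes h_2) \cdot s = h_1 s h_2$, since left and right multiplication by elements of $Z(H)$ agree on $S$. Once the Azumaya property is established, $H$ is a progenerator of $H^{e}$-modules, the functor $\Hom_{H^{e}}(H, -)$ is a Morita equivalence to $Z(H)$-modules with inverse $H \otimes_{Z(H)} -$, and evaluation at $1$ identifies $\Hom_{H^{e}}(H, S)$ with $C_S(H) = S'$. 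Applying the equivalence to $S$ then delivers the required multiplication isomorphism $H \otimes_{Z(H)} S' \xrightarrow{\ \sim\ } S$.

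The main obstacle is verifying that $H$ is Azumaya over $Z(H)$. I would first handle $\overline{W_{n,\hbar}}$ itself. In characteristic $p$ the identity $[x_i^{p}, y_j] = p x_i^{p-1} \hbar \delta_{ij} = 0$ (and its counterpart for $y_i^{p}$) forces the $x_i^{p}, y_i^{p}$ to be central, and a PBW argument shows that $\overline{W_{n,\hbar}}$ is free of rank $p^{2n}$ over $Z(\overline{W_{n,\hbar}}) = \bold{k}[[\hbar]][x_1^{p}, \ldots, x_n^{p}, y_1^{p}, \ldots, y_n^{p}]$, with basis the monomials $\{x^{I} y^{J} : 0 \le I_k, J_k < p\}$. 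The canonical map
\[
\overline{W_{n,\hbar}} \otimes_{Z(\overline{W_{n,\hbar}})} \overline{W_{n,\hbar}}^{\mathrm{op}} \longrightarrow \End_{Z(\overline{W_{n,\hbar}})}(\overline{W_{n,\hbar}})
\]
reduces modulo $\hbar$ to the classical Azumaya isomorphism for $W_n$ over $\bold{k}[x_1^{p}, \ldots, y_n^{p}]$; an $\hbar$-adic Nakayama argument then promotes this to an isomorphism after completion, both sides being free of the same finite rank $p^{4n}$. Since every two-sided ideal of an Azumaya algebra is generated by its intersection with the center, the hypothesis that $H$ is a quotient of $\overline{W_{n,\hbar}}$ forces $H \cong \overline{W_{n,\hbar}} \otimes_{Z(\overline{W_{n,\hbar}})} Z(H)$, which is Azumaya over $Z(H)$ by base change.

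With Azumaya in hand, the main decomposition is a formal consequence of Morita as described. For the addendum, any two-sided ideal $I \subset S$ is automatically an $H^{e}$-submodule of $S$ (since $H I H \subset S I S \subset I$), so the Morita equivalence transports it to the $Z(H)$-submodule $\Hom_{H^{e}}(H, I) = I \cap S' = I'$, giving $I = H \otimes_{Z(H)} I'$. I note that the closure of $I$ under $\tfrac{1}{\hbar}\ad(a)$ does not in fact enter this categorical argument; it would, however, be the natural assumption if one preferred to argue directly via a Taylor expansion using the pairwise commuting derivations $\tfrac{1}{\hbar}\ad(x_i)$ and $-\tfrac{1}{\hbar}\ad(y_i)$ on $S$ (each of $p$-th power zero, since $x_i^{p}, y_i^{p}$ lie in $Z(S)$), since one would then need these derivations to preserve $I$ in order to extract the components $s_{IJ}\in I'$ of an element $s\in I$.
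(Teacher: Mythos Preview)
Your Morita approach rests on a false premise: $\overline{W_{n,\hbar}}$ is \emph{not} Azumaya over its center. The specific error is the claim that the enveloping map reduces modulo $\hbar$ to the Azumaya isomorphism for $W_n$. The relation in $\overline{W_{n,\hbar}}$ is $[x_i,y_j]=\hbar\delta_{ij}$, so $\overline{W_{n,\hbar}}/\hbar$ is the \emph{commutative} polynomial ring $\bold{k}[x_1,\dots,y_n]$, not the Weyl algebra $W_n$. Consequently the map
\[
\overline{W_{n,\hbar}}\otimes_{Z}\overline{W_{n,\hbar}}^{\mathrm{op}}\longrightarrow \End_{Z}\bigl(\overline{W_{n,\hbar}}\bigr),\qquad Z=Z(\overline{W_{n,\hbar}}),
\]
reduces modulo $\hbar$ to the multiplication map $R\otimes_{R^p}R\to\End_{R^p}(R)$ for $R=\bold{k}[x_1,\dots,y_n]$, whose image is only the rank-$p^{2n}$ submodule of scalar multiplications inside a rank-$p^{4n}$ module. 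Equivalently, the fibre of $H$ at any closed point with $\hbar=0$ is the commutative local ring $\bold{k}[x_i,y_j]/(x_i^p,y_j^p)$, not a matrix algebra. So no Morita equivalence between $H^e$-modules and $Z(H)$-modules exists, and the decomposition $S\cong H\otimes_{Z(H)}S'$ cannot be extracted this way.

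The paper circumvents this degeneration by trading the $H$--$H$ bimodule structure on $S$ for an honest $W_1(\bold{k})$-module structure: $\alpha$ acts by left multiplication by $x$, and $D$ acts by $\tfrac{1}{\hbar}\ad(y)$, so that $[\alpha,D]=1$ with no $\hbar$ in sight, and $D^p=0$ since $y^p\in Z(S)$. Now $W_1(\bold{k})/(D^p)\cong M_p(\bold{k}[\alpha^p])$ is genuinely a matrix algebra, and decomposing $S$ as a module over it gives $S=\bold{k}[x]\otimes_{\bold{k}[x^p]}C_S(y)$; iterating in $y$ and then in $n$ finishes. This is precisely the ``Taylor expansion'' alternative you sketch at the end, and it explains why the hypothesis that $I$ be closed under $\tfrac{1}{\hbar}\ad(a)$ is not superfluous: it is exactly what makes $I$ a $W_1(\bold{k})$-submodule, so that the matrix decomposition applies to $I$ as well.
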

\begin{proof}
Proceeding by induction on $n$, it suffices to show the statement for $n=1.$
Let $H$ be generated over $\bold{k}[[\hbar]]$ by elements $x, y$ where $[x, y]=\hbar.$
By the assumption $x^p, y^p\in Z(S).$

Let us view $S$ as a module over the Weyl algebra $$W_1(\bold{k})=\bold{k}\langle \alpha, D\rangle, [\alpha, D]=1,$$ 
where the action by $\alpha$ is just the left multiplication by $x$, and $D(a)=\frac{1}{\hbar}[y, a], a\in S.$ 
Then $D^p(S)=0.$ Therefore $S$ is a left $W_1(\bold{k})/(D^p)$-module. On the other hand 
since $W_1/(D^p)$ is isomorphic to the $p\times p$ matrix algebra over $\bold{k}[\alpha^p],$
it follows that $S=k[x]\otimes_{\bold{k}[x^p]}S'$, where $S'$ is the centralizer
of $y$ in $S.$ Then $S'$ is $\bold{k}[y]$-module closed under the bracket $\frac{1}{\hbar}[ x, -].$ Arguing similarly as above
we get that $S=H\otimes_{Z(H)}S_1$, where $S_1$ is the centralizer of $H.$ Similarly $I=H\otimes_{Z(H)}I',$ where $I'=I\cap S'.$

\end{proof}

In the remainder of this section, given a ring $R,$ we will denote by $x$ and $y$ the usual
generators of the Weyl algebra over $R:$
$$W_1(R)=R\langle x, y\rangle,\quad [x, y]=1.$$

\begin{lemma}\label{identity}
Let $A$ be a weakly central quantization.
Let $f, g\in A$ be such that $[f, g]=\hbar.$ Then $$(f+g^{p-1}f^{[p]})^p-(g^{p-1}f^{[p]})^p\in Z(A).$$
\end{lemma}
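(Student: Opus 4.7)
The plan is to reduce the quantity $(f+h)^p - h^p$, with $h := g^{p-1} f^{[p]}$, to Jacobson's $p$-th power formula: in any associative algebra of characteristic $p$,
\[
(a+b)^p = a^p + b^p + \sum_{i=1}^{p-1} s_i(a,b), \qquad \ad(ta+b)^{p-1}(a) = \sum_{i=1}^{p-1} i\, s_i(a,b)\, t^{i-1}.
\]
Once the correction $\sum_i s_i(f,h)$ is identified with $-\hbar^{p-1} f^{[p]}$, the conclusion is immediate: $(f+h)^p - h^p = f^p - \hbar^{p-1} f^{[p]} \in Z(A)$ by the defining property of $f^{[p]}$.

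The preparatory step is to show that $c := f^{[p]}$ commutes with both $f$ and $g$. From $f^p - \hbar^{p-1} c \in Z(A)$, commuting with $f$ forces $\hbar^{p-1}[f,c]=0$, while commuting with $g$ gives $\hbar^{p-1}[g,c] = [g, f^p]$. The standard identity $\ad(u)^p = \ad(u^p)$ in characteristic $p$ (apply Frobenius to the commuting operators of left and right multiplication by $u$) then shows $[g,f^p] = -\ad(f)^p(g) = -\ad(f)^{p-1}(\hbar) = 0$. Because $A$ is topologically $\bold{k}[[\hbar]]$-free and hence $\hbar$-torsion-free, this yields $[f,c]=[g,c]=0$; in other words $c$ is central in the subalgebra generated by $f$ and $g$. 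This is the hinge of the argument: $h = g^{p-1} c$ is now essentially a one-variable object.

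Next I would compute the iterated commutators $a_k := \ad(tf + h)^k(f)$ for $1 \le k \le p-1$. Since $c$ commutes with $g$, $[h,a_k]$ vanishes at each step and the recursion collapses to $a_{k+1} = t\,[f, a_k]$; combined with $[f, g^m] = m\hbar g^{m-1}$ this produces the closed form $a_k = (-1)^{k-1} k!\, t^{k-1} \hbar^{k} c\, g^{p-1-k}$. Wilson's theorem $(p-1)! \equiv -1 \pmod{p}$ together with $p$ odd collapses $a_{p-1}$ to the single monomial $t^{p-2}\hbar^{p-1}c$. Reading off coefficients in Jacobson's formula forces $s_i(f,h)=0$ for $i<p-1$ and $(p-1)\,s_{p-1}(f,h) = \hbar^{p-1}c$, so $s_{p-1}(f,h) = -\hbar^{p-1}f^{[p]}$. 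Substituting back yields $(f+h)^p = f^p + h^p - \hbar^{p-1}f^{[p]}$, from which the claim follows. The only real obstacle is locating the appropriate combinatorial identity; once Jacobson's formula is in hand and $c$ is recognised as central in $\langle f,g\rangle$, every intermediate commutator collapses to a single term in $t$ and Wilson's theorem closes the argument.
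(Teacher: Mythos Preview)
Your argument is correct and reaches the same endpoint as the paper: the identity
\[
(f+g^{p-1}f^{[p]})^p-(g^{p-1}f^{[p]})^p=f^p-\hbar^{p-1}f^{[p]}\in Z(A).
\]
The paper obtains this by quoting Bavula's identity $(x+ry^{p-1})^p=x^p+(ry^{p-1})^p-r$ in $W_1(R)$ for $r$ central, and substituting $x=\hbar^{-1}f$, $y=g$, $r=\hbar^{-1}f^{[p]}$ (working in $A[\hbar^{-1}]$ and clearing denominators). You instead rederive this identity from Jacobson's $p$-th power formula, computing $\ad(tf+h)^{p-1}(f)$ term by term; Wilson's theorem then collapses everything to the single contribution $s_{p-1}(f,h)=-\hbar^{p-1}f^{[p]}$. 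Your route is more self-contained and makes the combinatorics explicit, at the cost of a longer calculation; the paper's is one line once the cited identity is granted.

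Two small points. First, your justification that $c=f^{[p]}$ commutes with $f$ and $g$ fills in something the paper simply asserts; this is a genuine addition. Second, the recursion $a_{k+1}=t[f,a_k]$ holds only for $k\ge 1$ (at $k=0$ one has $a_1=[h,f]$, not $0$); your closed form is correct, but you should state explicitly that the base case $a_1=\hbar c\,g^{p-2}$ is computed directly before the recursion kicks in.
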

\begin{proof}
Recall that in the Weyl algebra $W_1(R)$ over a commutative ring $R$ of characteristic $p$
the following identity (see for example [\cite{B}, Theorem 1.3])$$(x+ry^{p-1})^p=x^p+(ry^{p-1})^p-r, r\in R.$$
 In our case since $[\frac{1}{\hbar}f, g]=1$ and $f^{[p]}$ commutes with $f$ and $g,$
 by putting $$x=\frac{1}{\hbar}f, y=g, r=\frac{1}{\hbar}f^{[p]}$$  in the above equality we obtain that
$$(f+g^{p-1}f^{[p]})^p-(g^{p-1}f^{[p]})^p=f^p-\hbar^{p-1}f^{[p]}\in Z(A).$$

\end{proof}

\begin{lemma}\label{qow}

Let $M$ be a left  module over the Weyl algebra $W_1(\bold{k}).$
Then $$M=x^{p-1}M+yM.$$

\end{lemma}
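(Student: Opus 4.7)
The plan is to reduce the lemma to establishing a single algebraic identity inside the Weyl algebra, namely
\[
1 \;=\; x^{p-1}a + y b
\]
for suitable $a,b \in W_1(\bold{k})$. Once such $a,b$ are produced, the module statement follows at once: for any $m \in M$ we have $m = 1\cdot m = x^{p-1}(am) + y(bm) \in x^{p-1}M + yM$.

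To find $a$ and $b$, I will examine $x^{p-1}y^{p-1}$ modulo the right ideal $yW_1(\bold{k})$. The commutation $[x,y]=1$ yields $x^a y = yx^a + a\,x^{a-1}$, so modulo $yW_1(\bold{k})$ one has $x^a y \equiv a\,x^{a-1}$. Since $yW_1(\bold{k})\cdot y \subseteq yW_1(\bold{k})$, this reduction propagates through additional trailing $y$'s, and iterating it $p-1$ times gives
\[
x^{p-1}y^{p-1} \;\equiv\; (p-1)! \pmod{yW_1(\bold{k})}.
\]

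By Wilson's theorem $(p-1)! \equiv -1 \pmod{p}$, so $x^{p-1}y^{p-1}+1 \in yW_1(\bold{k})$; writing $x^{p-1}y^{p-1}+1 = yh$ yields
\[
1 \;=\; x^{p-1}(-y^{p-1}) + y h,
\]
the desired identity (with $a = -y^{p-1}$ and $b = h$). Substituting an arbitrary $m \in M$ then completes the argument. No serious obstacle is anticipated: the computation is elementary bookkeeping with the canonical commutation relation, and the only characteristic-$p$ ingredient is Wilson's theorem, which supplies the nonzero scalar needed so that the identity can actually be solved for $1$.
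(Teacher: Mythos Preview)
Your proof is correct and follows essentially the same idea as the paper's: both arguments come down to showing $1 \in x^{p-1}W_1(\bold{k}) + yW_1(\bold{k})$ via the identity $x^a y = y x^a + a\,x^{a-1}$ and the invertibility of $1,2,\dots,p-1$ in $\bold{k}$. The paper phrases this as an induction on $i$ showing $W_1(\bold{k}) = x^{i}W_1(\bold{k}) + yW_1(\bold{k})$ for each $i\le p-1$ (using $[x^{i},\tfrac{1}{i}y]=x^{i-1}$), whereas you unroll that induction into the single explicit computation $x^{p-1}y^{p-1}\equiv (p-1)!\equiv -1 \pmod{yW_1(\bold{k})}$, invoking Wilson's theorem; this has the minor bonus of producing the explicit witness $a=-y^{p-1}$.
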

\begin{proof}
In fact $W_1(\bold{k})=x^iW_1(\bold{k})+yW_1(\bold{k})$ for any $i\leq p-1.$
This follows immediately from induction on $i$ since $$[x^{i},\frac{1}{i} y]=x^{i-1}, 1\leq i\leq p-1.$$

\end{proof}

 \section{The proof of the main theorem}

 We have the following key result.

\begin{prop}\label{lift}
Let $(B, m)$ be a complete local  Noetherian Poisson $\bold{k}$-algebra with the maximal ideal $m$ such that
$B$ is a finitely generated module over $B^{p}.$ Let $I\subset m$ be a Poisson ideal. 
Let $A$ be a weakly central quantization of $B.$
Assume that $S=B/I$ contains elements $x_i, y_j, 1\leq i, j\leq n$ such that 
$$\lbrace x_i, y_j\rbrace=\delta_{ij}, \quad \lbrace x_i, x_j\rbrace=\lbrace y_i, y_j\rbrace=0,\quad \ad(x_i)^p=\ad(y_j)^p=0. $$
 Then there are elements $z_i, w_j\in A, 1\leq i, j\leq n$ such that they lift $x_i, y_j$ and 
 $$[z_i, w_j]=\delta_{ij}\hbar, \quad [z_i, z_j]=[w_i, w_j]=0, \quad z_i^p, w_j^p\in Z(A).$$ 
\end{prop}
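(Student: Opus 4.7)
The plan is to induct on $n$, the case $n=0$ being vacuous. For the inductive step, the key reduction is: once one has produced a single pair $(z_1, w_1)\in A$ lifting $(x_1, y_1)$ with $[z_1, w_1]=\hbar$ and $z_1^p, w_1^p\in Z(A)$, the subalgebra $H:=\bold{k}[[\hbar]]\langle z_1, w_1\rangle$ is a quotient of $\overline{W_{1,\hbar}}$ with $Z(H)\subset Z(A)$, so Lemma \ref{matrix} applies and gives $A\cong H\otimes_{Z(H)}A'$ with $A'$ the centralizer of $H$. One then checks that $A'$ is itself a weakly central quantization of the Poisson centralizer $B'$ of $\bar{z}_1, \bar{w}_1$ in $B$, which inherits the running hypotheses (complete local Noetherian Poisson, finite over $(B')^p$), and that the remaining elements $x_2,\ldots, y_n$ descend to a rank-$(n-1)$ configuration of the same sort in $B'/(I\cap B')$; the inductive hypothesis then concludes.

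The heart of the argument is constructing $(z_1, w_1)$. Pick arbitrary lifts $a, b\in A$ of $x_1, y_1$. In the first stage I would adjust to $[a, b]=\hbar$ exactly. Since $\{x_1, y_1\}=1$ in $S$, one has $[a, b]=\hbar(1+c)$ with $\bar{c}\in I$; modifying $b\mapsto b+\delta$ reduces, modulo $\hbar$, to solving a Poisson equation of the form $\{x_1, \bar{\delta}\}=-\bar{c}$ in $B$. Because $\{x_1, y_1\}\equiv 1\pmod{I}$, the assignment $\bar{c}\mapsto -\bar{y}_1\bar{c}$ is an approximate right inverse to $\{x_1,\cdot\}$ with error lying in the Poisson ideal $I$; iterating this correction using the $\hbar$-adic and $\rho^{-1}(m)$-adic completeness of $A$ produces a Cauchy sequence whose limit satisfies $[a, b]=\hbar$.

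In the second stage, with $[a, b]=\hbar$ in hand, I would arrange the restricted conditions via Lemma \ref{identity}. Weak centrality gives $a^p\equiv \hbar^{p-1}a^{[p]}\pmod{Z(A)}$, and the assumption $\ad(x_1)^p=0$ in $S$ forces $\bar{a}^{[p]}$ to be Poisson-central in $S$. Replacing $a$ by $a+b^{p-1}a^{[p]}$, Lemma \ref{identity} gives $a_{\mathrm{new}}^p\equiv (b^{p-1}a^{[p]})^p\pmod{Z(A)}$, with the right-hand side lying in a strictly deeper ideal thanks to the Poisson centrality of $\bar{a}^{[p]}$. Iterating (and running the symmetric correction for $b$, arranged so that the two sequences of corrections have commutators absorbed in the center) yields in the limit $a, b$ with $a^p, b^p\in Z(A)$ and $[a, b]=\hbar$ preserved.

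The hardest point, and the main obstacle, is the simultaneous convergence in the second stage while maintaining $[a, b]=\hbar$: the correction for $a$ shifts $b^p$, and vice versa, and one must verify that the residual commutator drift can always be absorbed. This requires tracking the error terms in a combined $(\hbar, \rho^{-1}(m))$-adic filtration of $A$; the hypothesis that $B$ is finitely generated over $B^p$ is essential here, as it controls the Frobenius-related correction terms so that the iteration contracts at a geometric rate.
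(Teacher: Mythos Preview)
Your overall architecture (induction on $n$, split off a copy of $\overline{W_{1,\hbar}}$ via Lemma~\ref{matrix}, then descend) matches the paper's. The gap is in your first stage, and you have misplaced the difficulty.

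\medskip
\textbf{Stage 1 does not contract.} Your scheme is to solve $\{a,\delta\}=-\bar c$ approximately by $\delta=-b\bar c$, using $\{a,b\}\equiv 1\pmod I$. Compute: $\{a,-b\bar c\}=-\{a,b\}\bar c-b\{a,\bar c\}$. The first term is $-(1+\iota)\bar c$ with $\iota\in I$, so it contributes $-\bar c$ plus something in $I^2$. But the second term $b\{a,\bar c\}$ lies only in $I$ (since $I$ is Poisson), not in any strictly smaller ideal; tracking the $m$-adic grading does not help, because a Poisson derivation drops the $m$-degree by one. So the iteration does not improve the error and never converges. This is not a technicality: $\ad(a)$ is genuinely not surjective on $I$, and multiplication by $b$ is not an approximate right inverse in any useful filtration.

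The paper handles exactly this obstruction, and it does so by using the $p$-structure already in stage~1 (not only in stage~2). The key ingredients you are missing are: (i) the auxiliary ideal $J=J'+m'^{[p]}A$ with $J'=\rho^{-1}(I)$; (ii) Lemma~\ref{qow}, which for any $W_1(\bold{k})$-module $M$ gives $M=x^{p-1}M+yM$, applied to $J'^{\,n}/J'^{\,n+1}$; and (iii) weak centrality via $g_n^{[p]}$. Concretely, one writes $z_n\in f_n^{p-1}J'^{\,n}+\tfrac{1}{\hbar}[J'^{\,n},g_n]+J'^{\,n+1}$; the commutator piece is absorbed by shifting $f_n$, while the $f_n^{p-1}$ piece is killed by adding $(f_n')^{p-1}g_n^{[p]}$ to $g_n$, since $\ad(g_n)^{p-1}$ annihilates the $f_n^{p-1}$ factor and the remaining $(f_n')^p$ lands in $m'^{[p]}A\subset J$. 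This is what makes the error drop from $J^n$ to $J^{n+1}$. A further subtlety you do not mention: only the $f_n$ converge; the $g_n$ need not, and one finishes by invoking Artin--Rees (this is where finiteness of $B$ over $B^p$, hence of $A$ over the Noetherian $Z(A)$, actually enters).

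\medskip
\textbf{Stage 2 has no drift.} Once $[f,g]=\hbar$, the correction $\phi\mapsto\phi+g^{p-1}\phi^{[p]}$ from Lemma~\ref{identity} preserves $[\phi,g]=\hbar$ automatically: from $\ad(\phi)^p=\hbar^{p-1}\ad(\phi^{[p]})$ and $\ad(\phi)^p(g)=0$ one gets $[\phi^{[p]},g]=0$, hence $[g^{p-1}\phi^{[p]},g]=0$. The analogous fact holds when correcting $g$ afterwards. So the ``simultaneous convergence'' problem you flag as the hardest point does not arise; the genuine work is all in stage~1.
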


Here is the Poisson version of this proposition without quantizations. Its proof is essentially the same as one of Proposition \ref{lift}.

\begin{prop}\label{lift'}
Let $(B, m)$ be a complete local  weakly restricted Noetherian Poisson $\bold{k}$-algebra with the maximal ideal $m$ such that
$B$ is a finitely generated module over $B^{p}.$ Let $I\subset m$ be a Poisson ideal. 
Assume that $S=B/I$ contains elements 
$x_i, y_j, 1\leq i, j\leq n$ such that 
$$\lbrace x_i, y_j\rbrace=\delta_{ij}, \quad \lbrace x_i, x_j\rbrace=\lbrace y_i, y_j\rbrace=0,\quad \ad(x_i)^p=\ad(y_j)^p=0.$$
 Then there are elements $z_i, w_j\in B, 1\leq i, j\leq n$ such that they lift $x_i, y_j$ and 
 $$\lbrace z_i, w_j\rbrace=\delta_{ij},\quad \lbrace z_i, z_j\rbrace=\lbrace w_i, w_j\rbrace=0,\quad \ad(z_i)^p=\ad(w_j)^p=0.$$ 

\end{prop}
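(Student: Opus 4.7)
The plan is to mimic the proof of Proposition \ref{lift}, replacing the $\hbar$-adic filtration on $A$ by the $m$-adic filtration on $B$. I would induct on $n$, with the step handled by a Poisson analogue of Lemma \ref{matrix} (proved identically, using $\alpha = L_{z_1}$ and $D = \ad(w_1)$, which satisfy the Weyl relation $[\alpha, D] = 1$ via $\{z_1, w_1\} = 1$ and $D^p = 0$ via $\ad(w_1)^p = 0$). Suppose I have already produced a pair $(z_1, w_1) \in B$ lifting $(x_1, y_1)$ with $\{z_1, w_1\} = 1$ and $\ad(z_1)^p = \ad(w_1)^p = 0$. Then $H_1 = \bold{k}\langle z_1, w_1\rangle \subset B$ is a quotient of the Weyl Poisson algebra whose Poisson center lies in the Poisson center of $B$, and one obtains a factorization $B \cong H_1 \otimes_{Z(H_1)} B'$, where $B'$ is the Poisson centralizer of $H_1$ in $B$. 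The algebra $B'$ inherits completeness, the Noetherian property, weak restrictedness, and finite generation over $p$-th powers; the ideal $I$ induces a Poisson ideal $I' \subset B'$; and the images of $x_2, \dots, x_n, y_2, \dots, y_n$ in $B'/I'$ satisfy the same hypotheses, so induction closes the step.

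All the content is then in the rank-one case. Fix lifts $\tilde{x}, \tilde{y} \in B$ of $x, y \in S$. In Stage 1 I would produce a Darboux pair $(\zeta, \eta) \in B \times B$ lifting $(\tilde{x}, \tilde{y})$ with $\{\zeta, \eta\} = 1$: starting from $(\tilde{x}, \tilde{y})$, the error $\{\tilde{x}, \tilde{y}\} - 1$ lies in the Poisson ideal $I$, and I would remove its successive $m$-adic strata by modifications $\eta \mapsto \eta + c$ with $c$ chosen so that $\ad(\zeta)(c)$ cancels the current error. The required surjectivity of $\ad(\tilde{x})$ on each graded piece would follow, modulo $m$, from the hypotheses $\ad(x)^p = \ad(y)^p = 0$ in $S$ together with Lemma \ref{qow}, applied to the $W_1(\bold{k})$-modules built from the quotients $m^k/m^{k+1}$ on which $\ad(\tilde{x})$ and $\ad(\tilde{y})$ act. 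In Stage 2 I would upgrade $(\zeta, \eta)$ to a pair with $\ad(\zeta)^p = \ad(\eta)^p = 0$ on $B$ (not merely on $S$): by hypothesis $\zeta^{[p]}$ is Poisson-central modulo $I$, and the Poisson analogue of Lemma \ref{identity} (a direct Jacobson-type calculation starting from $\{\zeta, \eta\} = 1$) should show that the replacement $\zeta' := \zeta + \eta^{p-1} \zeta^{[p]}$ pushes the obstruction $\ad(\zeta)^p$ one step deeper in the $m$-adic filtration; a symmetric step handles $\eta$.

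The principal obstacle is the coupling of the two conditions: each adjustment made to force $\ad(\zeta)^p = 0$ perturbs $\{\zeta, \eta\}$, and each adjustment made to restore the Poisson bracket perturbs the $p$-nilpotency. A simultaneous iteration alternating the two stages will therefore be necessary, and its convergence in the $m$-adic topology on $B$ must hinge on each correction shifting the obstruction strictly deeper into the filtration. The hypothesis that $B$ is finitely generated over $B^p$ should supply the finiteness needed for the Lemma \ref{qow}-type surjectivity arguments to produce corrections of the right size, and $m$-adic completeness of $B$ will deliver the limit $(z, w)$.
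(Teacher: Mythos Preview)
Your inductive scheme via a Poisson analogue of Lemma \ref{matrix} and your Stage 2 via the Poisson form of Lemma \ref{identity} both match the paper. The gap is in Stage 1, and your diagnosis of the ``principal obstacle'' is off.

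First, the filtration. Since $\{\zeta,\eta\}=1\notin m$, the maximal ideal $m$ is \emph{not} a Poisson ideal, so $\ad(\tilde x)$ does not preserve $m^k$ (Leibniz only gives $\ad(\tilde x)(m^k)\subset m^{k-1}$), and there is no $W_1(\bold k)$-action on $m^k/m^{k+1}$ built from $\ad(\tilde x),\ad(\tilde y)$. The paper filters instead by powers of the Poisson ideal $I$ (more precisely of $J=I+m^{[p]}B$), which \emph{is} stable under every $\ad(a)$.

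Second, and more seriously, Lemma \ref{qow} does not give the surjectivity you claim. With $\alpha=L_\zeta$ and $D=\ad(\eta)$ one obtains only
\[
M \;=\; \zeta^{p-1}M \;+\; \ad(\eta)(M)
\]
on the graded piece $M$, so the error decomposes as $\zeta^{p-1}z'+\{z'',\eta\}$ modulo higher order. The second summand is killed by $\zeta\mapsto\zeta-z''$, but the residue $\zeta^{p-1}z'$ is in neither $\ad(\zeta)(M)$ nor $\ad(\eta)(M)$ in general, and cannot be removed by a correction of the form $\eta\mapsto\eta+c$. The paper removes it by invoking the weakly restricted structure \emph{already in Stage 1}: replacing $\eta$ by $\eta+\zeta^{p-1}\eta^{[p]}$ and using $\ad(\eta)^{p}=\ad(\eta^{[p]})$ turns the bracket into $1+\zeta^{p}(\cdots)+(\text{higher})$, and $\zeta^{p}\in m^{[p]}\subset J$ pushes the error into $J^{n+1}$. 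Thus the $p$-operation is the engine of Stage 1, not merely of Stage 2. (The limit is then extracted via Artin--Rees, using that $\ad(f)(B)$ is a finitely generated $B^p$-module; this is where the hypothesis on $B^p$ enters.)

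Finally, the coupling you worry about does not occur. Once $\{\zeta,\eta\}=1$ holds exactly, the Stage 2 substitution $\zeta\mapsto\zeta+\eta^{p-1}\zeta^{[p]}$ preserves it: $\{\eta^{p-1}\zeta^{[p]},\eta\}=\eta^{p-1}\ad(\zeta)^p(\eta)=\eta^{p-1}\ad(\zeta)^{p-1}(1)=0$. So Stages 1 and 2 run consecutively with no interaction, and no alternating iteration is needed.
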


\begin{proof}[Proof of Proposition \ref{lift}]


We will proceed by induction on $n.$
Let $\rho:A\to A/\hbar A=B$ denote the quotient map.
Put $m'=\rho^{-1}(m).$ Thus $m'$ is the maximal ideal of $A$ and $A$ is complete in the $m'$-adic
topology. Put $m'^{[p]}=\lbrace x^p, x\in m'\rbrace.$
Let us put  $J'=\rho^{-1}(I)$ and $J=J'+m'^{[p]}A.$ Then $J, J'$ are two-sided ideals in $A$ and $\hbar\in J'\subset J.$
Moreover for any $a\in A$ we have $$\frac{1}{\hbar}\ad(a)(J')\subset J',\quad \frac{1}{\hbar}\ad(a)(J)\subset J.$$
Let $f_1, g_1\in A$ be arbitrary lifts of $x_1, y_1$ respectively. Therefore $$[f_1,  g_1]=\hbar+\hbar z_1,\quad  z_1\in J'.$$ 
Now given elements $f_n, g_n\in A, n\geq 1,$ such that $$x_1=f_n \mod J', y_1=g_n\mod J',\quad [f_n, g_n]=\hbar+\hbar z_n, z_n\in J^{n},$$ 
we will construct elements
$f_{n+1}, g_{n+1}\in A$ such that 
$$f_n=f_{n+1}\mod J'^{n},\quad y_1=g_{n+1}\mod J',$$
$$ [f_{n+1}, g_{n+1}]=\hbar+\hbar z_{n+1},\quad  z_{n+1}\in J^{n+1}.$$
We claim that there exist $\omega \in J'^{n+1}, z_n', z_n''\in J'^{n}$ such that $$\hbar z_n=\hbar f_n^{p-1}z_n'+[z''_n, g_n]+\hbar\omega.$$
\noindent  Indeed, notice that $J'^n/J'^{n+1}$ is a left module over the Weyl algebra $$W_1(\bold{k})=\bold{k}\langle x, y\rangle,\quad [x, y]=1,$$
where $x$ acts as the  multiplication by $f_n$, while $y$ acts as $-\frac{1}{\hbar}\ad(g_n).$ Now applying Lemma \ref{qow}
to  $J'^{n}/J'^{n+1}$ implies that $$z_n\in  f_n^{p-1}J'^n+\frac{1}{\hbar}[J'^n, g_n]+{J'}^{n+1},$$
which yields the desired result.
Let us put $f'_n=f_n-z_n''.$ Then we may write
$$[f'_n, g_n]=\hbar+\hbar (f'_n)^{p-1}w_n+\hbar\omega',\quad \omega'\in J^{n+1},\quad  w_n\in J^n.$$ 
Now recall that by our assumption on $A$, for any $a\in A$ there exists $a^{[p]}\in A$ such that
$$\ad(a^p)=\ad(a)^p=\hbar^{p-1}\ad(a^{[p]}).$$ 
\noindent Thus, we have
$$[f'_n, {\hbar}^{p-1}g_n^{[p]}]=\ad(g_n)^{p-1}(\hbar (f'_n)^{p-1}w_n+\hbar\omega')=(-1)\hbar^{p}w_n+\hbar^{p}f'_n\omega_n''+\hbar^p\eta, $$
\noindent for some $\omega_n''\in J^{n},\eta\in J^{n+1}.$ Then 
$$[f'_n, g_n+ (f'_n)^{p-1}g_n^{[p]}]=\hbar+\hbar (f'_n)^p\omega_n''+\hbar \eta',\quad (f'_n)^p\omega_n'\in J^{n+1},\quad\eta'\in J^{n+1}.$$
Since $y_1=g_n \mod J'$ and $\ad(y_1)^p=0,$ it follows that $g_n^{[p]}\in J'.$ Hence $$y_1=g_n+(f'_n)^pg_n^{[p]}\mod J'.$$
Therefore, we may put $$f_{n+1}=f'_n,\quad g_{n+1}=g_n+(f'_n)^{p-1}g_n^{[p]}.$$
We see from the construction that  the sequence $f_n, n\geq 1$ converges. Let us put $f=\lim f_n.$ Thus we have that $$[ f, g_n]=\hbar  \mod \hbar J^{n}.$$
Writing $g_n=g_1+\hbar g_n'$ and $[f, g_1]=\hbar+\hbar z_1$, we get that $$z_1=[f, -g_n'] \mod J^{n}.$$
Hence $$z_1\in \bigcap_{n}[f, A]+J^{n}.$$
It follows from our assumptions that $A$ is a finitely generated module over a complete
Noetherian ring $Z(A)$. Hence $[f, A]$ is finitely generated as a $Z(A)$-module. Then it follows from the Artin-Rees lemma
that there exists $g'\in A$ such that $[f, g']=z_1.$ Putting $g=g_1-\hbar g'$ we get that
$$[f, g]=\hbar, \quad x_1=f \mod J',\quad y_1=g\mod J'.$$

Next we want to modify elements $f, g$ such that 
$$\ad(f)^p=\ad(g)^p=0,\quad [f, g]=\hbar.$$
\noindent We will do so inductively: let $\phi_n\in A$ be such that $$\phi_n\mod J'=x_1,\quad [\phi_n, g]=\hbar,\quad \phi_n^{[p]}\in m'^n.$$ 
We have that by Lemma \ref{identity} $$(\phi_n+g^{p-1}\phi_n^{[p]})^{[p]}=(g^{p-1}\phi_n^{[p]})^{[p]}.$$
 \noindent But $(g^{p-1}\phi_n^{[p]})^{[p]}\in m'^{n+1}.$ Indeed, this
follows from the fact that if $x\in m^n,$ then $x^{[p]}\in m^n.$ 
Put $\phi_{n+1}=\phi_n+g^{p-1}\phi_n^{[p]}.$ Then since $\phi_{n}^{[p]}\in J',$ we
get that $ \phi_n=\phi_{n+1} \mod m'^nJ'$
and $\phi_{n+1}^{[p]}\in m'^{n+1}.$ Putting $\phi=\lim \phi_n$ we get that  $$[\phi, g]=\hbar,\quad \phi^{[p]}=0.$$
\noindent Proceeding in the same manner with $g$ we get elements $z_1, w_1\in A$ such that 
$$[z_1, w_1]=\hbar,\quad  x_1^{[p]}=0= y_1^{[p]},\quad z_1\mod J'=x_1, w_1\mod J'=y_1.$$

\noindent Let us put 
 $$H=\bold{k}[[\hbar]]\langle z_1, w_1\rangle,\quad \bar{z_1}=z_1\mod \hbar,\quad \bar{w_1}=w_1 \mod \hbar.$$ Thus by Lemma \ref{matrix} we have that  $A=H\otimes_{Z(H)}A_,$ where $A_1$ is the centralizer of $H$ in $A.$
It follows that $A_1$ is a weakly central quantization of the Poisson centralizer of $\bar{z_1}, \bar{w_1}$ in $B,$ which we denote
by $B_1.$ Let $J'_1=J'\cap A_1.$ Then by Lemma \ref{matrix}
 we have that $J'=H\otimes_{Z(H)}J'_1.$ In particular, $J'_1\mod \hbar=I'$ is a Poisson ideal in $B_1.$
It follows that $B_1/I'$ contains $x_i, y_j, i,j\geq 2.$ Proceeding by induction we are done.


\end{proof}

Next we will recall the following characteristic $p$ analogue of the Darboux's theorem 
from \cite{BK}.
\begin{lemma}\label{sheep}
Let $Y$ be an $2n$-dimensional affine symplectic variety over $\bold{k}$ 
such that $\bggo_{}(Y)$ is a weakly restricted Poisson algebra.
Let $y\in Y$ be a closed point, and let $\bggo_{}(Y)_{\bar{y}}$ denote the completion
of the coordinate ring of $Y$ with respect to $y.$ Then $\bggo_{}(Y)_{\bar{y}}$  is
isomorphic to the Poisson algebra $\bold{k}[[x_1,\cdots, x_n,y_1,\cdots, y_n]]$ with
the bracket  
$$\lbrace x_i, x_j\rbrace=0=\lbrace y_i,y_j\rbrace,\quad \lbrace x_i,y_j\rbrace=\delta_{ij}.$$

\end{lemma}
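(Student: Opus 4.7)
The plan is to prove the lemma by induction on $n$, closely paralleling the scheme of Proposition \ref{lift}: construct one Darboux pair, split it off via the Poisson centralizer, and recurse. Since $Y$ is smooth of dimension $2n$ at $y$, the completion $B:=\bggo(Y)_{\bar y}$ is a regular complete local Noetherian $\bold{k}$-algebra of Krull dimension $2n$, hence topologically isomorphic to $\bold{k}[[t_1,\dots,t_{2n}]]$ and in particular module-finite over $B^p$. The case $n=0$ is trivial, so assume $n\geq 1$.

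By non-degeneracy of the Poisson bracket at $y$, there exist $f_1,g_1\in m$ with $\{f_1,g_1\}\equiv 1\pmod m$. I would then correct this pair iteratively to obtain $x_1,y_1\in m$ satisfying $\{x_1,y_1\}=1$ and $\ad(x_1)^p=\ad(y_1)^p=0$ simultaneously. This is essentially the Poisson reformulation of the argument used in the proof of Proposition \ref{lift}: Lemma \ref{qow}, applied to the left $W_1(\bold{k})$-module structure on $B$ where $\alpha$ acts by multiplication by $f_n$ and $D$ by $-\ad(g_n)$, splits each error term as $f_n^{p-1}\cdot(\text{stuff})+\ad(g_n)(\text{stuff})$; the weakly restricted hypothesis together with the Poisson analog of Lemma \ref{identity} lets one absorb the $[p]$-obstructions via corrections of the shape $g^{p-1}f^{[p]}$; and Artin-Rees (available because $B$ is finitely generated over the complete Noetherian ring $B^p$) upgrades approximate bracket relations modulo deeper and deeper powers of a suitable Poisson ideal to the exact equality $\{x_1,y_1\}=1$. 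In effect this is an application of Proposition \ref{lift'} in the present setting.

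Once $(x_1,y_1)$ is in hand, I would invoke the Poisson version of Lemma \ref{matrix}: from $\ad(y_1)^p=0$ the action of $W_1(\bold{k})$ on $B$ factors through $W_1(\bold{k})/(D^p)\cong M_p(\bold{k}[\alpha^p])$, yielding a Poisson decomposition $B=\bold{k}[[x_1,y_1]]\otimes_{\bold{k}[[x_1^p,y_1^p]]}B_1$ where $B_1$ is the Poisson centralizer of $\{x_1,y_1\}$. Quotienting $B_1$ by the Poisson ideal $(x_1^p-x_1(y)^p,\,y_1^p-y_1(y)^p)\cdot B_1$ gives a complete local weakly restricted symplectic algebra of symplectic dimension $2(n-1)$, namely the formal symplectic slice at $y$ transverse to the plane spanned by $x_1,y_1$, to which the induction hypothesis applies; reassembling the tensor product yields the desired Darboux presentation of $B$.

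The main obstacle is the simultaneous attainment of $\{x_1,y_1\}=1$ exactly and $\ad(x_1)^p=\ad(y_1)^p=0$: naive corrections of one relation disturb the other, and the iteration has to be organized so that successive corrections lie in deeper powers of an appropriate Poisson ideal. The weakly restricted hypothesis on $\bggo(Y)$ is exactly what supplies the $[p]$-operation needed to cancel the leading $[p]$-obstructions at each stage of the iteration and make the whole scheme converge.
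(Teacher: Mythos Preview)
Your overall strategy---produce one Darboux pair, split it off via the centralizer, and recurse---is reasonable, and the endgame (the Poisson version of Lemma~\ref{matrix} and induction) is fine. The gap is in the very first step: producing $x_1,y_1\in m$ with $\{x_1,y_1\}=1$ and $\ad(x_1)^p=\ad(y_1)^p=0$.

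You begin with $\{f_1,g_1\}\equiv 1\pmod m$ and propose to run the iteration from the proof of Proposition~\ref{lift}. But that iteration is organized around powers of $J'$, the preimage of a \emph{Poisson} ideal $I$, and it is essential that the successive quotients $J'^n/J'^{n+1}$ carry an honest $W_1(\bold{k})$-module structure (multiplication by $f_n$ and $-\ad(g_n)$, with commutator equal to $1$). If you try to use the $m$-adic filtration instead, this fails: since $f_n,g_n\in m$, multiplication by $f_n$ raises $m$-degree while $\ad(g_n)$ lowers it, so on each $m^n/m^{n+1}$ both operators act by zero and Lemma~\ref{qow} gives nothing. And you cannot invoke Proposition~\ref{lift'} with $I=m$ either, since $m$ is \emph{not} a Poisson ideal (the bracket is symplectic, so $\{m,m\}\not\subset m$), and in $B/m=\bold{k}$ the relation $\{x_1,y_1\}=1$ is impossible. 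Your phrase ``a suitable Poisson ideal'' hides exactly the difficulty: the only candidate that makes the machinery run is $I=m^{[p]}$, and for that you must already know the Darboux form on $B/m^{[p]}$.

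This is precisely what the paper imports from Bezrukavnikov--Kaledin: by \cite[Theorem~1.1, Proposition~3.4]{BK}, the Frobenius neighborhood $\bggo(Y)_{\bar y}/m^{[p]}$ is Poisson-isomorphic to $\bold{k}[x_1,\dots,x_n,y_1,\dots,y_n]/(x_i^p,y_j^p)$ with the standard bracket. With this in hand, Proposition~\ref{lift'} applied with $I=m^{[p]}$ (which \emph{is} a Poisson ideal, since $p$-th powers are Poisson-central) lifts all $2n$ Darboux coordinates at once, and since their images span $m/m^2$ they form a regular system of parameters in the regular local ring $B$. So the missing ingredient in your argument is not a technicality but the substantive input from \cite{BK}; without it the first Darboux pair cannot be manufactured by the method you describe.
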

\begin{proof}
Let $m$ denote the maximal ideal of $\bggo_{}(Y)_{\bar{y}}.$ Denote by $m^{[p]}$ the ideal generated
by $b^p, b\in m.$
Then it follows from results of Bezrukavnikov-Kaledin [\cite{BK}, Theorem 1.1, Proposition 3.4] that
$\bggo_{}(Y)_{\bar{y}}/m^{[p]}$ as a Poisson algebra is isomorphic to 
$\bold{k}[x_1,\cdots, x_n, y_1,\cdots, y_n]/(x_1^p,\cdots x_n^p, y_1^p,\cdots, y_n^p) $
with the Poisson bracket $$\lbrace x_i, x_j\rbrace=0=\lbrace y_i,y_j\rbrace,\quad \lbrace x_i, y_j\rbrace=\delta_{ij}.$$
 Now Proposition \ref{lift'} yields the desired result.

\end{proof}

\begin{proof}[Proof of Theorem \ref{kwak}.]

Let $I$ be the Poisson ideal in $B_{\bar{y}}$ such that $B_{\bar{y}}/I=\bggo_{}(Y)_{\bar{y}}$- the completion
of $\bggo_{}(Y)$ at $y.$ It follows that $\bggo_{}(Y)_{\bar{y}}$ is a weakly restricted Poisson algebra.
Therefore by Lemma \ref{kwak}, it follows that 
$$\bggo_{}(Y)_{\bar{y}}=\bold{k}[[x_i, y_j]], \quad1\leq i, j\leq n,$$ with the usual Poisson bracket
$$\lbrace x_i, x_j\rbrace=0=\lbrace y_i,y_j\rbrace,\quad\lbrace x_i, y_j\rbrace=\delta_{ij}.$$
Now Proposition \ref{lift}  and Lemma \ref{matrix} imply that
 $$A_{\bar{y}}=\overline{W_{n,\hbar}}\otimes_{Z(\overline{W_{n,\hbar}})}A_{\bar{y}}^{+},$$ where $A_{\bar{y}}^{+}$ is the centralizer of
$\overline{W_{n, \hbar}}$ in $A_{\bar{y}},$ hence $\hbar\in A_{\bar{y}}^{+}.$ Put $L=A_{\bar{y}}^{+}/\hbar A_{\bar{y}}^{+},$ hence $L\subset B_{\bar{y}}.$ 
Then $A_{\bar{y}}^{+}$ is a quantization of $L$ and 
$$B_{\bar{y}}=\widehat{\bggo_{}(Y)_{\bar{y}}}\otimes_{\widehat{\bggo_{}(Y)_{\bar{y}}}^p}L,$$ where 
$\widehat{\bggo_{}(Y)_{\bar{y}}}$ 
is a lift of $\bggo_{}(Y)_{\bar{y}}$ under the quotient map
$B_{\bar{y}}\to  \bggo_{}(Y)_{\bar{y}}$. Therefore 
$L$ is the Poisson centralizer of $\widehat{\bggo_{}(Y)_{\bar{y}}}$ 
in $B_{\bar{y}}.$
Put $m'=\overline{m_{y}}\cap L.$ Then $m'$ is the maximal ideal of $L.$ We claim that $m'$ is a Poisson ideal.
Indeed the image of $m'$ in $B_{\bar{y}}/I=\bggo_{}(Y)_{\bar{y}}$ is in the center, therefore 
$$\lbrace m', m'\rbrace\subset I\cap L\subset m'.$$
Thus $A_{\bar{y}}^{+}$ is a quantization of a complete local Poisson algebra $L.$ 
Let $a\in A_{\bar{y}}^{+},$ then $a^p-\hbar^{p-1}a^{[p]}\in Z(A_{\bar{y}})\subset Z(A_{\bar{y}}^{+}).$
It follows that $a^{[p]}$ is in the centralizer of $\overline{W_{n, \hbar}},$ hence $a^{[p]}\in A_{\bar{y}}^{+}.$
Therefore $A_{\bar{y}}^{+}$ is a weakly central quantization as desired.
\end{proof}

As an immediate corollary of Theorem \ref{kwak}, one can reprove the following Kac-Weisfeiler type statement for quantizations as
in \cite{T}. 

\begin{cor}
Let $M$ be an $A$-module which is finite and free over $\bold{k}[[\hbar]],$ such that $M/\hbar M$ is supported
on $y.$ Then $\dim_{\bold{k}[[\hbar]]} M$ is a multiple of $p^{n}.$

\end{cor}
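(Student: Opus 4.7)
The plan is to reduce the claim to the classical fact that every finite-dimensional module over the Weyl algebra in characteristic $p$ has dimension divisible by $p^n$, then apply Theorem \ref{kwak} to produce such a module from $M$.

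First I would promote $M$ to an $A_{\bar{y}}$-module. Since $M$ is finite and free over $\bold{k}[[\hbar]]$, the quotient $M/\hbar M$ is finite-dimensional over $\bold{k}$; being supported at $y$, it is annihilated by some power $m_y^N$ of the maximal ideal of $B$. Hence $\rho^{-1}(m_y)^N \cdot M \subset \hbar M$, and iterating gives $\rho^{-1}(m_y)^{kN} \cdot M \subset \hbar^k M$. As $M$ is $\hbar$-adically complete, the $A$-action is continuous in the $\rho^{-1}(m_y)$-adic topology on $A$ and extends to $A_{\bar{y}}$. By Theorem \ref{kwak}, $\overline{W_{n,\hbar}}$ embeds into $A_{\bar{y}}$, so $M$ becomes a $\overline{W_{n,\hbar}}$-module.

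Next, invert $\hbar$. The rescaling $y_j \mapsto \hbar^{-1} y_j$ identifies $\overline{W_{n,\hbar}}[\hbar^{-1}]$ with the usual Weyl algebra $W_n(\bold{k}((\hbar)))$, and $M[\hbar^{-1}] := M \otimes_{\bold{k}[[\hbar]]} \bold{k}((\hbar))$ is a $\bold{k}((\hbar))$-vector space of dimension exactly $\dim_{\bold{k}[[\hbar]]} M$ (since $M$ is free), carrying an action of this Weyl algebra.

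Finally, I would invoke the Azumaya property: $W_n(k)$ over any field $k$ of characteristic $p$ is Azumaya of rank $p^n$ over its center $k[x_i^p, y_j^p]$. Any simple $W_n(k)$-module factors through a quotient $W_n(k)/\mathfrak{m} W_n(k) \cong M_{p^n}(D)$ for some maximal ideal $\mathfrak{m}$ of the center and division ring $D$, and hence has $k$-dimension divisible by $p^n$; Jordan--Hölder extends this divisibility to all finite-dimensional modules. Applied to $M[\hbar^{-1}]$ over $W_n(\bold{k}((\hbar)))$, this yields $p^n \mid \dim_{\bold{k}[[\hbar]]} M$. The only mild obstacle is the continuity step that produces the $A_{\bar{y}}$-action; everything substantive is already contained in Theorem \ref{kwak} together with a well-known property of Weyl algebras in positive characteristic.
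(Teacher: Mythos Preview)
Your argument is correct and follows essentially the same route as the paper's proof: pass to $A_{\bar{y}}$, use Theorem~\ref{kwak} to obtain an action of the completed Weyl algebra, invert $\hbar$, and invoke the fact that finite-dimensional modules over $W_n$ in characteristic $p$ have dimension divisible by $p^n$. The paper's proof is extremely terse (three sentences), whereas you have spelled out the continuity step extending the action to $A_{\bar{y}}$, the rescaling identifying $\overline{W_{n,\hbar}}[\hbar^{-1}]$ with $W_n(\bold{k}((\hbar)))$, and the Azumaya reasoning behind the divisibility---all of which the paper leaves implicit.
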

\begin{proof}

It follows that $M[\hbar^{-1}]$ is a nonzero  module over $A_{\bar{y}}[\hbar^{-1}].$ Therefore, by Theorem 
\ref{kwak} $M[\hbar^{-1}]$ is a module over $\overline{W_n}[\hbar^{-1}].$ Hence $\dim_{\bold{k}[[\hbar]]} M$ is
divisible by $p^n.$

\end{proof}

\begin{acknowledgement} 
I am very grateful to the referee for many useful suggestions that led to the improvement of the paper.

\end{acknowledgement}

\end{document}